\newtheorem{lemma}{Lemma}[section]
\newtheorem{corollary}[lemma]{Corollary}
\newtheorem{theorem}[lemma]{Theorem}
\newtheorem{proposition}[lemma]{Proposition}
\newtheorem{examples}[lemma]{Examples}
\newtheorem{conjecture}[lemma]{Conjecture}
\begin{document}

\subjclass[2000]{Primary: 16D70, 16W10, 16S99} \keywords{Leavitt path algebra, $^*$-regular, involution,
arbitrary graph}

\title[$\ast$-Regular Leavitt path algebras of arbitrary graphs]{$\ast$-Regular Leavitt path algebras
 of arbitrary graphs}

\author{G. Aranda Pino}
\address{Aranda Pino: Departamento de \'Algebra, Geometr\'\i a y Topolog\'\i a, Universidad de M\'alaga,
 29071 M{\'a}laga, Spain}
\email{g.aranda@uma.es}
\author{K. M. Rangaswamy}
\address{Rangaswamy: Department of Mathematics, University of Colorado, Colorado Springs,
 CO 80933, USA}\email{ranga@uccs.edu}
\author{L. Va\v{s}}
\address{Va\v{s}: Department of Mathematics, Physics and Statistics, University of the Sciences in
Philadelphia, Philadelphia, PA 19104, USA}
\email{l.vas@usp.edu}

\begin{abstract}
If $K$ is a field with involution and $E$ an arbitrary graph, the involution from $K$ naturally induces an
involution of the Leavitt path algebra $L_K(E).$ We show that the involution on $L_K(E)$ is proper if the
involution on $K$ is positive definite, even in the case when the graph $E$ is not necessarily finite or
row-finite.

It has been shown that the Leavitt path algebra $L_K(E)$ is regular if and only if $E$ is acyclic. We give
necessary and sufficient conditions for $L_{K}(E)$ to be $^\ast$-regular (i.e. regular with proper
involution). This characterization of $^\ast$-regularity of a Leavitt path algebra is given in terms of an
algebraic property of $K,$ not just a graph-theoretic property of $E.$ This differs from the known
characterizations of various other algebraic properties of a Leavitt path algebra in terms of graph-theoretic
properties of $E$ alone.

As a corollary, we show that Handelman's conjecture (stating that every $^\ast$-regular ring is unit-regular)
holds for Leavitt path algebras. Moreover, its generalized version for rings with local units also continues
to hold for Leavitt path algebras over arbitrary graphs.
\end{abstract}

\maketitle

\section*{Introduction}

Leavitt path algebras can be regarded as the algebraic counterparts of the graph $C^*$-algebras, the
descendants from the algebras investigated by J. Cuntz in \cite{Cuntz}. Leavitt path algebras can also be
viewed as a broad generalization of the algebras constructed by W. G. Leavitt in \cite{Leavitt} to produce
rings without the Invariant Basis Number property.

The Leavitt path algebra $L_K(E)$ was introduced in the papers \cite{AA1} and \cite{AMP}. $L_K(E)$ was first
defined for a row-finite graph $E$ (countable graph such that every vertex emits only a finite number of
edges) and a field $K$. Although their history is very recent, a flurry of activity has followed the papers
\cite{AA1} and \cite{AMP}. The main directions of research include: characterization of algebraic properties
of a Leavitt path algebra $L_K(E)$ in terms of graph-theoretic properties of $E$; study of the modules over
$L_K(E)$; computation of various substructures (such as the Jacobson radical, the center, the socle and the
singular ideal); investigation of the relationship and connections with $C^*(E)$ and general $C^*$-algebras;
classification programs; study of the $K$-theory; and generalization of the constructions and results first
from row-finite to countable graphs and finally, from countable to completely arbitrary graphs.  For examples
of each of these directions see \cite{AA1, APS, AMMS1, AT, AbAnhLouP, ABC, Tomforde, G}.

The base field $K$ is naturally endowed with an involution $^-$ (the identity involution can always be
considered in the absence of other possibilities). A given involution on $K$ naturally induces an involution
of a Leavitt path algebra $L_K(E).$ The presence of an involution on a ring yields some favorable features:
the ring is isomorphic to its opposite ring and a certain dose of symmetry is present. For example, a left
Rickart $^\ast$-ring is also a right Rickart $^\ast$-ring while this is not the case for one-sided Rickart
rings. Also, consideration of a complex Leavitt path algebra $L_{\mathbb{C}}(E)$ as a an algebra with
involution (induced from the complex-conjugate involution on $\mathbb{C}$), brings $L_{\mathbb{C}}(E)$ a step
closer to its analytic counterpart $C^*(E).$ These facts justify our interest in the study of the involution
on a Leavitt path algebra.

Iain Raeburn in \cite{Iain} shows that the involution on $L_K(E)$ is proper if the involution on $K$ is
positive definite and $E$ is a row-finite countable graph without sinks. We extend this result to arbitrary
graphs (Proposition \ref{involutionproperarbitrarygraphs}). We also show that the converse holds: if the
induced involution on $L_K(E)$ is positive definite for every (equivalently some) graph $E$, then the
involution on $K$ is positive definite (Proposition \ref{moreimplications}).

In \cite{AR}, Gene Abrams and the second named author characterized the (von Neumann) regular Leavitt path
algebras $L_K(E)$ as precisely those with acyclic underlying graphs $E.$ In light of our consideration of the
involution on $L_K(E),$ we wonder when is $L_K(E)$  $^*$-regular (regular with proper involution). In Theorem
\ref{characterizationtheorem}, we characterize $^*$-regular Leavitt path algebras as exactly those with $E$
acyclic and $K$ that is proper up to a certain extent (determined by the least upper bound of the numbers of
all paths that end at any given vertex of $E$). Note that we do not impose any conditions on the cardinality
of $E$: we work with completely arbitrary graphs.

Most existing characterization theorems for Leavitt path algebras have the following form:
\begin{center}
{\em $L_K(E)$ has (ring-theoretic) property $(P)$ if and only if  $E$ has (graph-theoretic) property $(P').$ }
\end{center}
Such theorems have been formulated and proven for a good number of ring-theoretic properties. For example
simple, purely infinite simple, exchange, semisimple, regular and other Leavitt path algebras have been
characterized. It is interesting that the underlying field $K$ did not play any role in those characterization
theorems. Theorem \ref{characterizationtheorem}, however, has a different form:
\begin{center}
{\em $L_K(E)$ has property $(P)$ if and only if  $E$ has property $(P')$ {\em and} $K$ has property $(P'').$
}
\end{center}
In other words, Theorem \ref{characterizationtheorem} is the first characterization theorem that involves a
ring-theoretic property of the field $K$ as well. Moreover, the characterization of $L_K(E)$ that is positive
definite (Proposition \ref{moreimplications}) also has the above form that features the field $K$ as well.

The paper is organized as follows. In \S 1 we recall the basic definitions, examples and properties of Leavitt
path algebras, whereas in \S 2 we focus on the involution of $L_K(E)$ and prove Propositions
\ref{involutionproperarbitrarygraphs} and \ref{moreimplications}. We devote \S 3 to the proof of the
characterization theorem for the $^\ast$-regular Leavitt path algebras (Theorem
\ref{characterizationtheorem}). Finally, in \S 4 we consider \cite[Problem 48, p. 380]{G} that we shall refer
to as ``Handelman's conjecture''. This conjecture is stating that every $^\ast$-regular ring is unit-regular.
We prove that Handelman's conjecture holds for Leavitt path algebras. In fact, we formulate a generalized
version of the conjecture for rings with local units and show that it holds for Leavitt path algebras over
arbitrary graphs.

\section{Definitions and preliminaries}

We recall some graph-theoretic concepts, the definition and standard examples of Leavitt path algebras.

A (\emph{directed}) \emph{graph} $E=(E^0,E^1,r,s)$ consists of two sets $E^0$ and $E^1$ (with no restriction
on their cardinals) together with maps $r,s:E^1 \to E^0$. The elements of $E^0$ are called \emph{vertices} and
the elements of $E^1$ \emph{edges}. For $e\in E^1$, the vertices $s(e)$ and $r(e)$ are called the
\emph{source} and \emph{range} of $e$. If $s^{-1}(v)$ is a finite set for every $v\in E^0$, then the graph is
called \emph{row-finite}. If $E^0$ is finite and $E$ is row-finite, then $E^1$ must necessarily be finite as
well; in this case we say simply that $E$ is \emph{finite}.

A vertex which emits (receives) no edges is called a \emph{sink (source)}. A vertex $v$ is called an
\emph{infinite emitter} if $s^{-1}(v)$ is an infinite set. A \emph{path} $\mu$ in a graph $E$ is a finite
sequence of edges $\mu=e_1\dots e_n$ such that $r(e_i)=s(e_{i+1})$ for $i=1,\dots,n-1$. In this case,
$s(\mu)=s(e_1)$ and $r(\mu)=r(e_n)$ are the \emph{source} and \emph{range} of $\mu$, respectively, and $n$ is
the \emph{length} of $\mu$. We view the elements of $E^{0}$ as paths of length $0$.

\smallskip

If $\mu$ is a path in $E$, with $v=s(\mu)=r(\mu)$ and $s(e_i)\neq s(e_j)$ for every $i\neq j$, then $\mu$ is
called a \emph{cycle}. A graph which contains no cycles is called \emph{acyclic}.

Let $K$ denote an arbitrary base field and $E$ an arbitrary graph. The {\em Leavitt path $K$-algebra} $L_K(E)$
is the $K$-algebra generated by the set $E^0\cup E^1\cup \{e^*\mid e\in E^1\}$ with the following relations:
\begin{enumerate}
\item[(V)] $vw= \delta_{v,w}v$ for all $v,w\in E^0$.
\item[(P1)] $s(e)e=er(e)=e$ for all $e\in E^1$.
\item[(P2)] $r(e)e^*=e^*s(e)=e^*$ for all $e\in E^1$.
\item[(CK1)] $e^*f=\delta _{e,f}r(e)$ for all $e,f\in E^1$.
\item[(CK2)] $v=\sum _{e\in s^{-1}(v)}ee^*$ for every $v\in E^0$ that is neither a sink nor an infinite emitter.
 \end{enumerate}

The first three relations are the path algebra relations. The last two are the so-called Cuntz-Krieger
relations.

We let $r(e^*)$ denote $s(e)$, and we let $s(e^*)$ denote $r(e)$.  If $\mu = e_1 \dots e_n$ is a path in $E$,
we write $\mu^*$ for the element $e_n^* \dots e_1^*$ of $L_{K}(E)$. With this notation, the Leavitt path
algebra $L_{K}(E)$ can be viewed as a $K$-vector space span of $\{pq^{\ast } \ \vert \ p,q\,  \hbox{are paths
in} \,  E\}$. (Recall that the elements of $E^0$ are viewed as paths of length $0$, so that this set includes
elements of the form $v$ with $v\in E^0$.)

If $E$ is a finite graph, then $L_{K}(E)$ is unital with $\sum_{v\in E^0} v=1_{L_{K}(E)}$; otherwise,
$L_{K}(E)$ is a ring with a set of local units consisting of sums of distinct vertices of the graph.

Many well-known algebras can be realized as the Leavitt path algebra of a graph. The most basic graph
configurations are shown below (the isomorphisms for the first three can be found in \cite{AA1}, the fourth in
\cite{S}, and the last one in \cite{AA2}).

\begin{examples}\label{examples}{\rm The ring of Laurent polynomials $K[x,x^{-1}]$ is the Leavitt path
algebra of the graph given by a single loop graph. Matrix algebras ${\mathbb M}_n(K)$ can be realized by the
line graph with $n$ vertices and $n-1$ edges. Classical Leavitt algebras $L(1,n)$ for $n\geq 2$ can be
obtained by the $n$-rose -- a graph with a single vertex and $n$ loops. Namely, these three graphs are:

$$\begin{matrix} \xymatrix{{\bullet} \ar@(ur,ul)} \hskip3cm &
\xymatrix{{\bullet} \ar [r]  & {\bullet} \ar [r]  & {\bullet} \ar@{.}[r] & {\bullet} \ar [r]  & {\bullet} }
\hskip3cm  & \xymatrix{{\bullet} \ar@(ur,dr)  \ar@(u,r)  \ar@(ul,ur)  \ar@{.} @(l,u) \ar@{.} @(dr,dl)
\ar@(r,d) & }
\end{matrix}$$

\medskip

The algebraic counterpart of the Toeplitz algebra $T$ is the Leavitt path
algebra of the graph having one loop and one exit:
$$\xymatrix{{\bullet} \ar@(dl,ul) \ar[r] & {\bullet}  }$$

Combinations of the previous examples are possible. For instance, the Leavitt path algebra of the graph

$$\xymatrix{{\bullet} \ar [r]  & {\bullet} \ar [r]  & {\bullet}
\ar@{.}[r] & {\bullet} \ar [r]  & {\bullet}
 \ar@(ur,dr)  \ar@(u,r)  \ar@(ul,ur)  \ar@{.} @(l,u) \ar@{.} @(dr,dl) \ar@(r,d) & }$$ \smallskip

\noindent is ${\mathbb M}_n(L(1,m))$, where $n$ denotes the number of vertices in the graph and $m$ denotes
the number of loops.
}
\end{examples}

\section{The involution on a Leavitt path algebra}

We recall some standard definitions first.

An \emph{involution} $^\ast$ on a ring $R$ is an additive map $^\ast:R\rightarrow R$ that satisfies
$(ab)^{\ast }=b^{\ast}a^{\ast }$ and $(a^{\ast })^{\ast }=a$ for all $a,b\in R.$ For any $a\in R$, the element
$a^{\ast }$ is called the \emph{adjoint} of $a$. An element $p$ in a ring with involution $^\ast$ is called a
\emph{projection} if $p$ is a self-adjoint $(p^{\ast }=p)$ idempotent $(p^{2}=p)$.

If there is an involution defined on a ring $R,$ $R$ is said to be a {\em $^\ast$-ring}. If $R$ is also an
algebra over $K$ with an involution $^-,$ then $R$ is a {\em $^\ast$-algebra} if $(ax)^*=\overline{a}x^*$ for
$a\in K,$ and $x\in R.$

Let $n$ be a positive integer. An involution $^\ast$ on a ring $R$ is said to
be \emph{n-proper} if
$$x_{1}^{\ast}x_{1}+\dots+x_{n}^{\ast}x_{n}=0 \text{ implies } x_{1}=\dots =x_{n}=0$$ for any $n$
elements $x_{1},\dots,x_{n}$ in $R$. A ring with an $n$-proper involution will be refered to as an
$n$-\emph{proper} ring. This property is clearly left-right symmetric, since each element $x_{i}=a_{i}^{\ast}$
for some $a_{i}\in R$. 1-proper involution is simply said to be {\em proper}.

The involution $^\ast$ is said to be \emph{positive definite} if it is $n$-proper for every positive integer
$n.$ A $^\ast$-ring with a positive definite involution will be referred to as a {\em positive definite} ring.

A field can have both an $n$-proper involution and an involution that is not $n$-proper. For example, consider
the field $\mathbb{C}$: it is 2-proper (in fact positive definite) for the conjugate involution ($a+ib\mapsto
a-ib$) and not 2-proper for the identity involution.  Also, the same involution can be $n$-proper and not
$n+1$-proper (identity involution on $\mathbb{C}$ for $n=1$).

Before we turn to Leavitt path algebras, let us recall one last fact about general $^\ast$-rings. Recall that
if $R$ is a ring with involution $^-$, then the involution $^-$ induces the $^\ast$-\emph{transpose
involution} on the ring ${\mathbb M}_{n}(R)$ of $n\times n $ matrices over $R$ given by $$A=(a_{ij})\mapsto
A^*=(\overline{a_{ji}}).$$

We believe that the following lemma is well known but we are not aware of a reference for it. For
completeness, we provide a proof here.

\begin{lemma}\label{R_n-proper}
Let $n$ be a positive integer and let $R$ be a ring with involution $^-$. Then the $^\ast$-transpose
involution on ${\mathbb M}_{n}(R)$ is proper if and only if the involution $^-$ in $R$ is $n$-proper.
\end{lemma}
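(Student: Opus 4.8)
The plan is to reduce both implications to a single computation: for a matrix $A=(a_{ij})\in {\mathbb M}_n(R)$, the $(i,j)$ entry of $A^*A$ under the transpose involution is
$$(A^*A)_{ij}=\sum_{k=1}^n \overline{a_{ki}}\,a_{kj},$$
so that its diagonal entries
$$(A^*A)_{ii}=\sum_{k=1}^n \overline{a_{ki}}\,a_{ki}$$
are precisely sums of the form appearing in the definition of $n$-properness, each having exactly $n$ summands.

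For the direction assuming $^-$ is $n$-proper in $R$, I would take any $A$ with $A^*A=0$. Then every diagonal entry vanishes, giving $\sum_{k=1}^n \overline{a_{ki}}\,a_{ki}=0$ for each fixed column index $i$. Applying $n$-properness of $^-$ to the $n$ elements $a_{1i},\dots,a_{ni}$ forces $a_{ki}=0$ for all $k$, i.e.\ the $i$-th column of $A$ is zero. Letting $i$ range over $1,\dots,n$ gives $A=0$, so the transpose involution on ${\mathbb M}_n(R)$ is proper.

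For the converse, assuming the transpose involution on ${\mathbb M}_n(R)$ is proper, I would start from elements $x_1,\dots,x_n\in R$ with $\sum_{k=1}^n \overline{x_k}\,x_k=0$ and encode them as the matrix $A$ whose first column is $(x_1,\dots,x_n)^{\mathrm{t}}$ and all of whose remaining entries are $0$. A direct check shows that every entry of $A^*A$ vanishes except possibly the $(1,1)$ entry, which equals $\sum_{k=1}^n \overline{x_k}\,x_k=0$; hence $A^*A=0$. Properness of the transpose involution then yields $A=0$, i.e.\ $x_1=\dots=x_n=0$, proving that $^-$ is $n$-proper in $R$.

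I expect no serious obstacle, as the argument is an elementary matrix calculation. The one point requiring care is the index bookkeeping: one must confirm that each relevant sum has exactly $n$ terms, so that it is $n$-properness (and not $m$-properness for some other $m$) that is both used and produced, and one must write out the transpose-involution entries carefully so that the row and column indices align correctly in the product $A^*A$.
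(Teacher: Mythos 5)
Your proof is correct and follows essentially the same route as the paper's own proof: the forward direction reads off the $n$-proper condition from the vanishing diagonal entries of $A^*A$, column by column, and the converse encodes the $n$ elements as the first column of a matrix whose only possibly nonzero entry of $A^*A$ is the $(1,1)$ entry. Your extra care with the entry formula $(A^*A)_{ij}=\sum_{k=1}^n \overline{a_{ki}}\,a_{kj}$ is exactly the bookkeeping the paper leaves implicit.
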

\begin{proof}
Assume that the involution $^-$ is $n$-proper in $R$. Suppose $A^*A=0$ for some matrix $A=(a_{ij})\in {\mathbb
M}_{n}(R)$.  Then the diagonal entries of the product $A^*A$ are zero and so
$\sum_{j=1}^{n}\overline{a_{ji}}a_{ji}=0$ for every $i=1,\dots,n$. Since $^-$ is $n$-proper, $a_{ij}=0$ for
all $i,j$. Hence $A=0.$

Conversely, suppose $\sum_{i=1}^{n}\overline{a_{i}}a_{i}=0$ for $a_{i}\in R $. Consider $A$ to be the matrix
of ${\mathbb M}_{n}(R)$ that has the elements $a_{1},\dots,a_{n}$ in its first column and zeroes in the rest
of its columns. Then $A^*A=0$. Since the $^\ast$-transpose involution is proper,
$A=0.$ So $a_{i}=0$ for every $i$.
\end{proof}

We turn now to Leavitt path algebras. Let $K$ be a field with involution $^-$ and let $E$ be an arbitrary
graph. Recall that a typical element of the Leavitt path algebra $L_{K}(E)$ can be written as
$\sum_{i=1}^{n}k_{i}p_{i}q_{i}^{\ast}$ where $p_{i}$ and $q_{i}$ are paths and $k_i\in K.$ It is
straightforward to see that the map $^\ast$ given by
$$\left(\sum_{i=1}^{n}k_{i}p_{i}q_{i}^{\ast}\right)^{\ast}=
\sum_{i=1}^{n}\overline{k_i}q_{i}p_{i}^{\ast}$$
defines the involution on $L_{K}(E)$ making it into a $^\ast$-algebra.

If $K$ is the field of complex numbers $\mathbb{C}$ and we consider the conjugate involution, the
$^\ast$-algebra structure of $L_{\mathbb{C}}(E)$ agrees with the $^{\ast}$-algebra structure of
$L_{\mathbb{C}}(E)$ (that is used for instance to see $L_{\mathbb{C}}(E)$ as a dense $^*$-subalgebra of
$C^*(E)$ as shown in \cite[Theorem 7.3]{Tomforde}).

As we will see in the next section, the $^*$-regularity of a Leavitt path algebra $L_K(E)$ is closely related
to the condition stating that the involution of $L_K(E)$ is $n$-proper or positive definite. In this section,
we characterize the positive definiteness of $L_K(E)$.

The following proposition can be proved by easily adapting Iain Raeburn's result \cite[Lemma 1.3.1]{Iain} to
our notation and context.

\begin{proposition}\label{Iain} Let $E$ be a row-finite, countable graph without sinks. If the involution
$^-$ on $K$ is positive definite, then the involution $^\ast$ on $L_K(E)$ is proper.
\end{proposition}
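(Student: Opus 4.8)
The plan is to show directly that the induced involution is proper, i.e.\ that $x^\ast x=0$ forces $x=0$ for every $x\in L_K(E)$; this is exactly the $1$-proper condition. Writing $x=\sum_i c_i p_iq_i^\ast$ with $c_i\in K$ and $p_i,q_i$ paths, the strategy is to manufacture a $K$-valued positive-definite form on a representation space on which $L_K(E)$ acts as a $^\ast$-algebra, and then to let the positive definiteness of the involution on $K$ do the work that the $C^\ast$-norm does in Raeburn's analytic setting.

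Concretely, first I would build the path-space representation $\pi$ of $L_K(E)$. Let $V$ be the free $K$-module on the set $E^\infty$ of (right-)infinite paths of $E$, equipped with the sesquilinear $K$-valued form for which the vectors $\xi_\alpha$, $\alpha\in E^\infty$, are orthonormal. One lets each vertex $v$ act as the diagonal projection onto the infinite paths with source $v$, each edge $e$ act by prepending $e$ (and by $0$ where prepending is illegal), and $e^\ast$ by deleting an initial $e$. A routine check shows that these operators satisfy (V), (P1), (P2) and (CK1); the hypothesis that $E$ has no sinks is exactly what makes every infinite path from $v$ begin with an edge emitted by $v$, so that the sum $\sum_{e\in s^{-1}(v)}ee^\ast$ (finite, by row-finiteness) acts as $\pi(v)$ and (CK2) holds as well. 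Countability of $E$ keeps the module manageable and $\pi$ well defined. Moreover, by construction $\pi(e)$ and $\pi(e^\ast)$ are mutually adjoint for the form, so $\pi(a^\ast)=\pi(a)^\dagger$ for all $a$; that is, $\pi$ is a $^\ast$-representation.

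Granting this, the positive definiteness of $K$ finishes the positivity step cleanly. If $x^\ast x=0$, then for every basis vector $\xi_\alpha$ we get $\langle \pi(x)\xi_\alpha,\pi(x)\xi_\alpha\rangle=\langle \xi_\alpha,\pi(x^\ast x)\xi_\alpha\rangle=0$. Expanding $\pi(x)\xi_\alpha=\sum_\beta k_{\alpha\beta}\xi_\beta$ (a finite sum, with $k_{\alpha\beta}\in K$) and using orthonormality, this reads $\sum_\beta \overline{k_{\alpha\beta}}\,k_{\alpha\beta}=0$; since the involution on $K$ is positive definite (hence $n$-proper for the relevant $n$), every $k_{\alpha\beta}=0$, so $\pi(x)\xi_\alpha=0$. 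As $\alpha$ was arbitrary, $\pi(x)=0$.

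The remaining, and genuinely delicate, step is to pass from $\pi(x)=0$ to $x=0$, i.e.\ \emph{faithfulness}. This is where the bare infinite-path representation is not quite enough: a cycle without an exit (already the single-loop graph, where $L_K(E)\cong K[t,t^{-1}]$) has essentially one infinite path, and $\pi$ collapses it. The fix I would use is to run the same construction on the infinite-path space of a suitable $\mathbb{Z}$-cover (skew product) of $E$, producing a graded $^\ast$-representation that is faithful while still being built from an orthonormal $K$-basis; the argument of the previous paragraph then applies verbatim, and faithfulness yields $x=0$. Thus the main obstacle is not the positivity estimate---that is handed to us by the positive definiteness of $K$ exactly as the $C^\ast$-norm hands it to Raeburn over $\mathbb{C}$---but the construction and verification of a faithful $^\ast$-representation on a space carrying a positive-definite $K$-form, with the no-sinks hypothesis securing (CK2) and the covering securing faithfulness at cycles. (For $K=\mathbb{C}$ one may instead simply invoke the embedding $L_{\mathbb{C}}(E)\hookrightarrow C^\ast(E)$ together with the automatic properness of the involution on a $C^\ast$-algebra, which is precisely Raeburn's route.)
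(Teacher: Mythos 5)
Your route is genuinely different from the paper's: the paper does not argue from scratch at all, but reduces the statement to Raeburn's result \cite[Lemma 1.3.1]{Iain}, checking only that the edge-direction conventions match (his ``column-finite with no sources'' is our ``row-finite with no sinks''). Your first two steps are essentially correct: the prepending/deleting operators on the free $K$-module over $E^\infty$ do satisfy (V), (P1), (P2), (CK1) and (CK2) (row-finiteness makes the (CK2) sum finite), they are mutually adjoint for the sesquilinear form with orthonormal basis $\{\xi_\alpha\}$, and the passage from $x^\ast x=0$ to $\pi(x)\xi_\alpha=0$ is a correct use of positive definiteness of $K$ --- this is exactly where $n$-properness for all $n$ is needed, since the number of nonzero coordinates of $\pi(x)\xi_\alpha$ is unbounded as $x$ varies. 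You are also right that the naive path-space representation is not faithful (the single loop collapses to $\pi(e)=\pi(v)$), and the skew-product repair is the right idea.

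The gap is that faithfulness of the covered representation --- which you yourself identify as the ``genuinely delicate'' step --- is asserted, not proved, and it is precisely where all the content of the proposition lives. Everything you actually establish yields only $\pi(x)=0$, which for a non-faithful $\pi$ says nothing; and nothing in your earlier verification ``applies verbatim'' to give injectivity, since injectivity is a property of a completely different nature from the Cuntz--Krieger relations. The claim is true, and within this paper's toolkit the clean way to close it is Tomforde's Graded Uniqueness Theorem \cite{Tomforde}: your representation on the free module $V$ over $E^\infty\times\mathbb{Z}$ is graded (a homogeneous element of degree $d$ maps the span $V_n$ of $\{\xi_{(\alpha,n)}\}$ into $V_{n-d}$, and such homogeneous endomorphisms form a $\mathbb{Z}$-graded subring of $\mathrm{End}_K(V)$), and it kills no vertex, because in a graph without sinks every vertex is the source of at least one infinite path. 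Those are exactly the hypotheses of that theorem, whose proof is algebraic, so no circularity arises. Note, incidentally, that this is where ``no sinks'' is really needed: (CK2) holds in your representation at any non-sink automatically, but a sink $v$ would satisfy $\pi(v)=0$ and faithfulness would fail. Alternatively you could prove faithfulness by hand from the structure of the zero component of $L_K(E)$, but that amounts to reproving the graded uniqueness theorem, i.e., to doing work comparable to Raeburn's original computation. As written, then, your proposal is a correct strategy with its central step missing; with the grading spelled out and the citation to \cite{Tomforde} supplied, it becomes a complete proof, and an instructive alternative to the paper's one-line reduction to \cite{Iain}.
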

\begin{proof}
The proof follows completely \cite[Lemma 1.3.1]{Iain}. One only needs to take into account that the axioms in
\cite{Iain} are given so that a path $e_1e_2\ldots e_n$ in  \cite{Iain} corresponds to the path
$e_ne_{n-1}\ldots e_1$ here (i.e. the edges in \cite{Iain} are multiplied so that the action of $f$ precedes
the action of $e$ in the product $ef,$ contrary to the action we consider here). Because of this difference,
the assumptions of \cite[Lemma 1.3.1]{Iain} that $L_K(E)$ is column-finite with no sources, correspond exactly
to our assumptions that $L_K(E)$ is row-finite with no sinks.
\end{proof}

It is noted in \cite{Iain} that the condition that $E$ does not have sinks (sources, in the terminology of
that paper) can be avoided by using the so-called Yeend's trick. This assumption can also be avoided using a
technique called the Desingularization Process. The added benefit of the desingularization is that it can help
us also get rid of the row-finiteness assumption. The desingularization of a graph $E$ is a new graph $F$
obtained by adding a tail (more details can be found in  \cite{DT} or \cite{AA3}) to every sink or infinite
emitter. The resulting graph $F$ is a row-finite graph without sinks such that $L_K(E)$ embeds in $L_K(F)$ via
the embedding that is a $^*$-algebra homomorphism (see \cite[Proposition 5.1]{AA3} for more details).

Finally, the last remaining assumption (that the graph is countable) in Proposition \ref{Iain} can be avoided
by means of the Subalgebra Construction (see \cite{AR} for more details). We recall here the relevant concept
$E_F$ used in this construction. We shall use $E_F$ in our main theorem too.

Let $F$ be a finite set of edges in $E$. We define $s(F)$ (resp. $r(F)$) to be the sets of those vertices in
$E$ that appear as the source (resp. range) vertex of at least one element of $F$. The graph $E_F$ is then
defined as follows (see \cite[Definition 2]{AR}):

\vskip-0.2cm

$$E_F^0=F\cup (r(F)\cap s(F)\cap s(E^1\setminus F))\cup (r(F)\setminus s(F)),$$

\vskip-0.5cm

$$E_F^1=\{(e,f)\in F\times E^0_F \ | \ r(e)=s(f)\}\cup [\{(e,r(e))\ | \ e\in F \text{ with } r(e)\in
(r(F)\setminus s(F))\}],$$

\vskip-0.5cm

$$\text{and where }s((x,y))=x \text{ and } r((x,y))=y\text{ for any }(x,y)\in E_F^1.$$

The graph $E_F$ is finite (see comment after \cite[Definition 2]{AR}). Also, by \cite[Proposition 1]{AR},
there is an algebra homomorphism $\theta: L_K(E_F)\rightarrow L_K(E).$ Further, \cite[Proposition 2]{AR} shows
that for every finite set of elements $S$ of $L_K(E),$ there is a subalgebra $B(S)$ of $L_K(E)$ containing
$S.$ The subalgebra $B(S)$ is of the form $L_{K}(E_{F})\bigoplus (\bigoplus_{i=1}^m Kx_{i})$ where $F$ is a
finite set of edges defined using $S$ (see \cite[page 7]{AR}) and $x_i, i=1,\ldots, m$ is a finite set of
vertices (defined in \cite[page 8]{AR}). By  \cite[Proposition 2]{AR}, $L_{K}(E)$ is a directed union of
subalgebras $B(S)$, where the $S$ varies over all finite subsets of $L_K(E)$. Furthermore, and key to our
current discussion, $\theta$ preserves the involution by the construction (as it can be seen from the proof of
\cite[Proposition 1]{AR}) so it is a $^\ast$-algebra homomorphism.

\begin{proposition}\label{involutionproperarbitrarygraphs} Let $E$ be an arbitrary graph. If the involution
$^-$ on $K$ is positive definite, then the involution $^*$ on $L_{K}(E)$ is proper.
\end{proposition}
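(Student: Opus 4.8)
The plan is to combine the two reduction techniques recalled above — the Subalgebra Construction, to dispose of the cardinality of $E$, and the Desingularization Process, to dispose of sinks — with the observation that properness of an involution is a purely finitary property: it is inherited by $^*$-subalgebras and can be detected on a directed union. The first thing I would record is precisely this observation. If $R$ is a ring with proper involution and $S\subseteq R$ is closed under $^*$, then for $x\in S$ the equation $x^*x=0$ reads the same in $S$ as in $R$, so $x=0$; hence properness descends to $^*$-subalgebras. The same remark shows that if $L_K(E)$ is a directed union of $^*$-subalgebras, then any single $x$ with $x^*x=0$ already lies in one member of the union, so it suffices to prove that each member is proper.

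Next I would invoke the Subalgebra Construction. Since $L_K(E)$ is the directed union of the subalgebras $B(S)=L_K(E_F)\oplus\left(\bigoplus_{i=1}^m Kx_i\right)$, and since each such $B(S)$ is closed under $^*$ (because $\theta$ is a $^*$-homomorphism and each vertex $x_i$ is self-adjoint, so the summands are orthogonal $^*$-closed pieces), it is enough to show that each $B(S)$ is proper. A direct sum of proper $^*$-rings is proper, since $y^*y=0$ in the direct sum forces each component $y_i^*y_i$ to vanish and hence each $y_i=0$; thus it suffices to treat the two kinds of summands separately. For the one-dimensional pieces $Kx_i$ one computes $(kx_i)^*(kx_i)=\overline{k}k\,x_i$, which vanishes only if $\overline{k}k=0$; as a positive definite involution is in particular $1$-proper, this forces $k=0$. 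Each $Kx_i$ is therefore proper, and the problem reduces to showing that $L_K(E_F)$ is proper for the finite graph $E_F$.

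The heart of the argument — and the step I expect to be the main obstacle — is handling the finite graph $E_F$, which may well contain sinks, so that Proposition \ref{Iain} does not apply to it directly. Here I would apply the Desingularization Process to $E_F$. Because $E_F$ is finite, its desingularization $F'$ is a countable, row-finite graph without sinks, and the desingularization comes equipped with a $^*$-algebra embedding realizing $L_K(E_F)$ as a $^*$-subalgebra of $L_K(F')$. Proposition \ref{Iain} now applies to $F'$: since the involution on $K$ is positive definite, the involution on $L_K(F')$ is proper. By the descent of properness to $^*$-subalgebras noted at the outset, $L_K(E_F)$ is proper as well.

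Assembling the pieces, every summand of every $B(S)$ is proper, so each $B(S)$ is proper, and since $L_K(E)$ is the directed union of the $B(S)$, its involution is proper. The only subtleties to guard against are purely bookkeeping: that the chosen $B(S)$ and the decomposition respect the involution (guaranteed by $\theta$ being a $^*$-homomorphism and the $x_i$ being projections), and that the desingularization is carried out after the Subalgebra Construction, so that we only ever desingularize a finite graph and stay within the countable, row-finite, sink-free hypotheses demanded by Proposition \ref{Iain}.
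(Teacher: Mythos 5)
Your proposal is correct and takes essentially the same approach as the paper: both reduce an arbitrary graph to finite graphs via the Subalgebra Construction (using that $\theta$ is a $^*$-homomorphism, so the pieces $B(S)$ are $^*$-closed), and both handle the resulting finite graphs by desingularizing and invoking Proposition \ref{Iain}. The only cosmetic difference is bookkeeping: the paper absorbs the summands $Kx_i$ into the graph by attaching isolated vertices and then applies its separately-proved countable case, whereas you dispose of the $Kx_i$ by a direct-sum argument and desingularize $E_F$ directly.
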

\begin{proof}

We prove the claim first for the case when $E$ is a countable. Let $F$ be a desingularization of $E.$ By the
Desingularization Process, $F$ is a row-finite graph without sinks and there is a $^\ast$-algebra monomorphism
$\phi:L_{K}(E)\to L_{K}(F).$ Now, suppose that $a^*a=0$ in $L_{K}(E)$. Apply $\phi$ to get that
$\phi(a)^*\phi(a)=0$ in $L_{K}(F).$ Since $F$ is row-finite and does not contain sinks, Proposition \ref{Iain}
can be applied and so $\phi(a)=0.$ Then $a=0$ since $\phi$ is a monomorphism.

\par
Now suppose that $E$ is arbitrary and let $a\in L_{K}(E)$ be such that $a^*a=0$. By the Subalgebra
Construction, for the finite set $S=\{a, a^*\},$ there is a finite set of edges $F$ and a finite number of
vertices $x_i, i=1,\ldots, m$ such that the subalgebra $B(S)$ of $L_K(E)$ is of the form
$L_{K}(E_{F})\bigoplus (\bigoplus_{i=1}^m Kx_{i})$ and $a,a^*\in B(S)$. Since $\bigoplus_{i=1}^m Kx_{i}$ is a
direct summand in the previous equation for $B(S)$, we can actually attach a finite number of isolated
vertices $v_{1},\dots,v_{m}\not\in E_{F}^0$ to the graph $E_{F}$  so that we obtain a new finite graph $G$
such that
$$L_K(G)\cong L_{K}(E_{F})\bigoplus \left(\bigoplus_{i=1}^m Kv_{i}\right)$$ via a $^*$-algebra isomorphism.

Since $B(S)$ is a subalgebra of $L_{K}(E)$, the equation $a^*a=0$ holds in $B(S)\cong L_{K}(G)$. Apply the
previous case to $L_{K}(G)$ in order to deal with possible sinks in $G.$ Then we have that $a=0$. This
finishes the proof.
\end{proof}

\medskip

The last result of this section is a characterization of Leavitt path algebras that have positive definite
involutions in terms of the corresponding property in the field $K$.

\begin{proposition}\label{moreimplications} Let $K$ a field with involution. The following conditions
are equivalent.
\begin{enumerate} [{\rm (i)}]
\item The involution on $K$ is positive definite.
\item The involution on $L_{K}(E)$ is positive definite for every graph $E$.
\item The involution on $L_{K}(E)$ is positive definite for some graph $E$.
\end{enumerate}
Thus, if $E$ is an arbitrary graph, $L_{K}(E)$ is positive definite if and only if $K$ is positive definite.
\end{proposition}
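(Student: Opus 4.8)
The plan is to prove the cyclic chain of implications (i) $\Rightarrow$ (ii) $\Rightarrow$ (iii) $\Rightarrow$ (i), where the final sentence follows immediately since positive definiteness is the conjunction of $n$-properness over all $n$. The implication (i) $\Rightarrow$ (ii) is the substantial content already established: if the involution on $K$ is positive definite, then by Proposition \ref{involutionproperarbitrarygraphs} the involution on $L_K(E)$ is proper for every arbitrary graph $E$. To upgrade ``proper'' to ``positive definite'' (that is, $n$-proper for every $n$), I would pass to matrices. The key observation is that an $n$-proper involution on $L_K(E)$ is equivalent, via Lemma \ref{R_n-proper}, to a proper ($1$-proper) involution on the matrix ring $\mathbb{M}_n(L_K(E))$ equipped with the $^\ast$-transpose involution.

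The main step is therefore to realize $\mathbb{M}_n(L_K(E))$ itself as a Leavitt path algebra of a suitable graph, in a way that matches the $^\ast$-transpose involution with the natural involution on that Leavitt path algebra. This is a standard construction: from $E$ one forms a new graph, say $E(n)$, by attaching to each vertex $v\in E^0$ a head of length $n-1$ (a directed line segment $v_{n-1}\to v_{n-2}\to \cdots \to v_1 \to v$ feeding into $v$), so that $L_K(E(n))\cong \mathbb{M}_n(L_K(E))$. I expect this to be the place requiring the most care, because I must check not merely the algebra isomorphism but that it is a $^\ast$-isomorphism carrying the $^\ast$-transpose involution to the natural involution on $L_K(E(n))$; the matching of adjoints of paths with transpose-conjugation of matrix entries is what makes this work, and it should follow because both involutions are determined by $\overline{k}\,(pq^\ast)^\ast = \overline{k}\,qp^\ast$ on the spanning set. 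Once this is in hand, I would argue: given $n$, apply Proposition \ref{involutionproperarbitrarygraphs} to the arbitrary graph $E(n)$ to conclude that the involution on $L_K(E(n))\cong \mathbb{M}_n(L_K(E))$ is proper, and then invoke Lemma \ref{R_n-proper} to conclude that the involution on $L_K(E)$ is $n$-proper. Since $n$ was arbitrary, $L_K(E)$ is positive definite, giving (ii).

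The implication (ii) $\Rightarrow$ (iii) is trivial, as one simply specializes the universal statement to a single chosen graph (the existence of at least one graph being guaranteed, for instance by the graph consisting of a single vertex). The remaining implication (iii) $\Rightarrow$ (i) is the converse direction and I expect it to be short. Here I would exploit the fact that $K$ embeds as a $^\ast$-subalgebra of $L_K(E)$: for any vertex $v\in E^0$, the corner $vL_K(E)v$ contains the copy $Kv$ of $K$, and the involution restricted to $Kv$ is precisely $kv\mapsto \overline{k}v$, reproducing the involution of $K$. If the involution on $L_K(E)$ is positive definite and $k_1,\dots,k_n\in K$ satisfy $\sum \overline{k_i}k_i = 0$ in $K$, then the elements $k_i v$ satisfy $\sum (k_i v)^\ast (k_i v) = \bigl(\sum \overline{k_i}k_i\bigr)v = 0$ in $L_K(E)$, so by $n$-properness each $k_i v = 0$, whence each $k_i = 0$. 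As this holds for every $n$, the involution on $K$ is positive definite, establishing (i) and closing the cycle.
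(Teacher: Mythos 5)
Your proposal is correct and follows essentially the same route as the paper: the implication (i) $\Rightarrow$ (ii) is proved there by exactly your construction (the graph the paper calls $M_nE$, with a line of length $n-1$ attached ending at each vertex), citing \cite[Proposition 9.3]{AT} for the fact that $\mathbb{M}_n(L_K(E))\cong L_K(M_nE)$ as $^\ast$-algebras, followed by Proposition \ref{involutionproperarbitrarygraphs} and Lemma \ref{R_n-proper}, and the implication (iii) $\Rightarrow$ (i) is the same projection-at-a-vertex argument, with the paper additionally citing \cite[Lemma 1.5]{G} (linear independence of vertices) to justify the step $k_iv=0\Rightarrow k_i=0$ that you assert via the embedding $Kv\cong K$.
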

\begin{proof}
(i) $\Longrightarrow$ (ii). Given $E,$ let us consider the graph $M_nE$ obtained from $E$ by attaching a line
of length $n-1$ to every vertex of $E$ so that each line ends at the given vertex of the graph (more details
in \cite{AT}). The graph $M_nE$ has the property that ${\mathbb M}_n(L_K(E))$ is isomorphic to $L_K(M_nE)$ as
$^\ast$-algebras by \cite[Proposition 9.3]{AT}.

By Proposition \ref{involutionproperarbitrarygraphs}, we know that $L_K(M_nE)$ is $^*$-proper. So, ${\mathbb
M}_n(L_K(E))$ is $^*$-proper. Now apply Lemma \ref{R_n-proper} to get that $L_K(E)$ is $n$-proper.

\par

(ii) $\Longrightarrow$ (iii) is a tautology.

\par

(iii) $\Longrightarrow$ (i). Suppose that $\sum_{i=1}^n \overline{k_{i}}k_{i}=0$ for $k_{i}\in K$. Let $E$ be
a graph such that the involution on $L_{K}(E)$ is positive definite. Let $v\in E^0$. Since $v$ is a projection
then $0=(\sum_{i=1}^n \overline{k_{i}}k_{i})v=\sum_{i=1}^n(k_{i}v)^* (k_{i}v) $ and therefore $k_{i}v=0$ for
all $i$ by hypothesis. But $E^0$ is a set of linearly independent elements in $L_{K}(E)$ by \cite[Lemma
1.5]{G}, so that $k_{i}=0$ for all $i$, as needed.
\end{proof}

\section{$^*$-regular Leavitt path algebras}

The (von Neumann) regular Leavitt path algebras $L_K(E)$ were characterized in \cite{AR} as those whose graphs
$E$ are acyclic. In light of the consideration of $L_K(E)$ as a ring with involution, we wonder which acyclic
graphs have $L_K(E)$ that is $^*$-regular. We provide an answer to this question in this section.

Recall that a ring $R$ is (von Neumann) regular if for every $a\in R$ there exists $b\in R$ such that $aba=a$,
or equivalently \cite[Theorem 4.23]{Lam}, every right (resp. left) principal ideal is generated by an
idempotent. This statement continues to hold if $R$ is a ring with local units since $b\in bR$ (and $b\in Rb$)
for all $b$ in $R$ so the principal right (and left) ideals of $R$ have the same form as the principal right
(left) ideals of a unital ring.

If $R$ is a $^\ast$-ring, the projections take over the role of idempotents. Thus, the concept of regularity
for rings corresponds to $^\ast$-regularity for $^*$-rings: a $^\ast$-ring $R$ is said to be
$^\ast$\emph{-regular} if every principal right ideal is generated by a projection. This definition naturally
extends to rings with local units. Note that the condition of $^\ast$-regularity is left-right symmetric since
$aR=pR$ implies that $Ra^{\ast}=Rp$ for any $a\in R$ and a projection $p\in R.$ So every principal left ideal
of $R$ is also generated by a projection in the case when every principal right ideal is.

A $^\ast$-ring is $^\ast$-regular if and only if it is regular and the involution $^*$ is proper (see
\cite[Exercise 6A, \S 3]{Be}). In the next proposition we give a proof of this fact for rings with local
units.

\begin{proposition}\label{regularproper}
Let $R$ be a ring with local units and with an involution $^\ast$. Then $R$ is $^\ast$-regular if and only if
$R$ is regular and $^\ast$ is proper.
\end{proposition}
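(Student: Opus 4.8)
The plan is to prove both directions of the equivalence for a ring $R$ with local units and proper involution $^\ast$. For the forward direction, I would assume $R$ is $^\ast$-regular, meaning every principal right ideal is generated by a projection, and deduce that $R$ is regular and $^\ast$ is proper. Regularity is immediate: if $aR = pR$ for a projection $p$, then in particular $p$ is an idempotent generating $aR$, which by the remark in the excerpt (the local-units version of \cite[Theorem 4.23]{Lam}) gives regularity. To see that $^\ast$ is proper, I would suppose $a^\ast a = 0$ and show $a = 0$. Writing $aR = pR$ with $p$ a projection, we have $p = a t$ for some $t \in R$ and $a = a e = p a$ for a local unit $e$ with $a e = a$ (here I use that $a \in aR = pR$, so $p$ acts as a left identity on $a$). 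Then $a = pa$ gives $a^\ast = a^\ast p^\ast = a^\ast p$, and multiplying $a^\ast a = 0$ appropriately, or more directly computing $p = p^\ast = (at)^\ast = t^\ast a^\ast$ so that $a = pa = t^\ast a^\ast a = 0$. This last computation is the crux of the easy direction and exploits self-adjointness of $p$ in an essential way.

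For the converse, I would assume $R$ is regular with proper involution and show every principal right ideal is generated by a projection. Fix $a \in R$. By regularity there is $b$ with $aba = a$. The standard trick is to replace the idempotent $e = ba$ (which satisfies $eR = \ldots$, with $aR = aba R \subseteq a R$ and the idempotent $ab$ generating $aR$ on the appropriate side) by a projection generating the same ideal. Concretely, $ab$ is an idempotent with $abR = aR$ since $a = (ab)a \in abR$ and $ab \in aR$. The goal is to manufacture from the idempotent $f = ab$ a \emph{projection} $p$ with $pR = fR = aR$. The classical construction uses the element $f^\ast f$ or, following the Rickart/Berberian approach, one symmetrizes: set $g = f^\ast f$ and use regularity again to control it, ultimately producing $p$ via a formula such as $p = f^\ast f (1 + (f - f^\ast)^\ast(f - f^\ast))^{-1}$ in the unital case — but since $R$ has only local units, I must carry out this construction inside a corner $eRe$ where $e$ is a local unit dominating all relevant elements, so that $eRe$ is a genuine unital $^\ast$-regular ring and the unital argument applies verbatim.

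The main obstacle, and the reason this proposition is stated separately rather than cited, is precisely the passage from unital rings to rings with local units. In the unital Berberian argument one freely uses $1$ and inverses of elements of the form $1 + x^\ast x$; these inverses exist because $1 + x^\ast x$ is a unit in a regular ring with proper involution. To replicate this for local units, I would first choose a single local unit $e$ with $ea = ae = a$ and $eb = be = b$, observe that the corner $eRe$ inherits regularity and properness of the involution (properness is inherited by any $^\ast$-subring, and $eRe$ is regular since corners of regular rings are regular), then apply the unital equivalence inside $eRe$ to obtain a projection $p \in eRe$ with $p(eRe) = a(eRe)$. The remaining step is to check that $pR = aR$ in the full ring $R$, not merely in the corner; this follows because $a \in eRe$ forces $aR = a(eRe)R$ and similarly for $p$, using that $e$ is a local unit for $a$ and $p$. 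I expect the bookkeeping around these corner-to-whole-ring ideal identifications to be the most delicate part, whereas the algebraic manufacture of the projection is standard once a unit is available.
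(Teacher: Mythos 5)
Your forward direction is correct and is essentially the paper's own argument (the paper computes $a^{\ast}=a^{\ast}p=a^{\ast}ay=0$ where you compute $a=pa=t^{\ast}a^{\ast}a=0$; both are fine). The converse, however, contains a genuine gap --- in fact two. First, the corner reduction fails at its key step: $eRe$ is closed under $^{\ast}$ only when $e^{\ast}=e$, and the hypothesis ``ring with local units'' supplies merely \emph{idempotent} local units, not self-adjoint ones. So the claim that ``properness is inherited by any $^{\ast}$-subring'' does not apply to $eRe$, and the unital $^{\ast}$-regular theory cannot be invoked inside it. Producing a projection local unit dominating $a$ and $b$ is essentially the same problem as the theorem itself (manufacturing projections from idempotents), so repairing this step by ``choosing $e$ self-adjoint'' is circular as it stands.

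Second, even granting a unital $^{\ast}$-ring, the construction you quote is not available: invertibility of elements of the form $1+x^{\ast}x$ is \emph{not} a consequence of regularity plus properness. For example, $\mathbb{C}$ with the identity involution is regular with proper involution (any involution on a field is proper), yet $1+i^{\ast}i=1+i^{2}=0$ is not invertible; formulas of the type $p=f^{\ast}f\bigl(1+(f-f^{\ast})^{\ast}(f-f^{\ast})\bigr)^{-1}$ belong to the Rickart $C^{\ast}$-algebra/Baer $^{\ast}$-ring setting where such invertibility is an axiom or follows from positivity. The paper's proof avoids both problems and needs no unit at all: by regularity it suffices to treat an idempotent $x$; properness gives $r_{R}(x)=r_{R}(x^{\ast}x)$; regularity plus local units gives $Rx=l_{R}(r_{R}(x))$ (this is the \emph{only} place local units enter: if $y(a-xa)=0$ for all $a$, pick an idempotent $u$ with $yu=y$ and $xu=x$, so $y=yu=yxu=yx\in Rx$); hence $Rx=Rx^{\ast}x$, so $x=ax^{\ast}x$ for some $a$, and one checks directly that $p=ax^{\ast}$ is a projection with $Rx^{\ast}=Rp$. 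If you want to salvage your outline, replace the corner trick and the $(1+x^{\ast}x)^{-1}$ formula by this annihilator computation.
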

\begin{proof}
If $R$ is $^\ast$-regular then it is also regular because every projection is an idempotent. Now assume that
$a^{\ast}a=0$ for some $a\in R.$ Then $aR=pR$ for some projection $p$ so $a=pa$ ($a=px$ for some $x\in R$
implies that $pa=ppx=px=a$) and $p=ay$ for some $y\in R.$ So, $a^{\ast}=a^{\ast}p=a^{\ast}ay=0$. Hence $a=0.$
Thus $^\ast$ is proper.

Conversely, suppose that $R$ is regular and $^\ast$ is proper. Since every principal right ideal is generated
by an idempotent, it is enough to show that for an arbitrary idempotent $x$ in $R$, $Rx^{\ast}=Rp $ for some
projection $p\in R$. First observe that for any $x\in R$, $r_{R}(x)=r_{R}(x^{\ast}x)$ where $r_{R}(b)$ denotes
the right annihilator of the element $b\in R$. This is because, $x^{\ast}xy=0$ implies that
$(xy)^{\ast}xy=y^{\ast}(x^{\ast}xy)=0$ so that $xy=0$ for any $y\in R$. By the regularity of $R$,
$Rx^{\ast}x=Rf$ for some idempotent $f\in R$. Thus $r_{R}(x)=r_{R}(x^{\ast}x)=r_{R}(f)$ and so the left
annihilators $l_{R}(r_{R}(x))$ and $l_{R}(r_{R}(f))$ are also equal.

We claim that $Rx=l_{R}(r_{R}(x))$. To see this, first note that, since $x$ is an idempotent,
$r_{R}(x)=\{a-xa\ | \ a\in R\} $. So if $y\in l_{R}(r_{R}(x))$, then $y(a-xa)=0$ for all $a\in R$, that is
$ya=yxa$ for all $a\in R$. Since $R$ is a ring with local units, there is an idempotent $u\in R$ such that
$yu=y$ and $xu=x$. Hence $y=yu=yxu=yx\in Rx$. Thus $l_{R}(r_{R}(x))\subseteq Rx$. Since the reverse inclusion
is obvious, $Rx=l_{R}(r_{R}(x))$.

Similarly, $Rf=l_{R}(r_{R}(f))$. Thus $Rx=Rf=Rx^{\ast}x $. Hence $x=ax^{\ast}x$ for some $a\in R$. Let
$p=ax^{\ast}$. We claim that $p$ is a projection with $Rx^{\ast}=Rp$. To see this, note that $x=px$ and so
$pp^{\ast}=pxa^{\ast}=xa^{\ast}=p^{\ast}$. Since $(pp^{\ast})^{\ast}=pp^{\ast}$, we get $p=p^{\ast}$. From
$pp^*=p^*$ and $p=p^{\ast}$ we have $p^{2}=p$ and so $p$ is a projection. From $p=ax^{\ast}$ and
$x^*=x^{\ast}p$ we have that $Rx^{\ast}=Rp.$
\end{proof}

We turn to Leavitt path algebras now. For any vertex $v$ in a graph $E$, let $\mu (v)$ denote the cardinality
of the set of all the paths $\alpha$ in $E$ with $r(\alpha )=v$ (including the trivial path $v$). With this
notation, we recall the statement of \cite[Lemma 3.4 and Proposition 3.5]{AAS1}. Let $E$ be a finite acyclic
graph and $v$ a sink in $E.$ The set $$I_{v}=\left\{\sum_{i}k_{i}\alpha _{i}\beta _{i}^{\ast}\ | \ \alpha
_{i},\beta _{i}\text{ paths in }E\text{ with }r(\alpha _{i})=r(\beta _{i})=v,k_{i}\in K\right\}$$ is an ideal
of $L_{K}(E)$ isomorphic to the matrix ring ${\mathbb M}_{\mu(v)}(K).$ If $\{v_{1},\dots,v_{m}\}$ is the set
of all sinks in $E,$ then $L_{K}(E)=\bigoplus _{i=1}^{m}I_{v_{i}}\cong \bigoplus _{i=1}^{m}{\mathbb M}_{\mu
(v_{i})}(K)$. Let us denote this isomorphism by $\phi$ and let us call it a \emph{canonical isomorphism}.

From \cite[Lemma 3.4 and Proposition 3.5]{AAS1} it can be seen that the restriction of $\phi$ on a direct
summand $I_v,$ for a vertex $v,$ is given by $\phi(\sum_{i,j}k_{ij}\alpha _{i}\alpha _{j}^{\ast})=(k_{ij})\in
{\mathbb M}_{\mu (v)}(K)$ where $i,j=1,\dots,\mu(v),$ $\alpha_i$ and $\alpha_j$ are paths ending at $v$ and
$k_{ij}\in K.$

\begin{lemma}\label{canonicalisomorphism} Let $E$ be a finite acyclic graph and let $\{v_{1},\dots,v_{m}\}$
be all the sinks in $E$. The canonical isomorphism $\phi:L_{K}(E)=\bigoplus _{i=1}^{m}I_{v_{i}}\rightarrow
\bigoplus _{i=1}^{m}{\mathbb M}_{\mu (v_{i})}(K)$ is a $^*$-algebra isomorphism (with the standard involution
on $L_{K}(E)$ and the $^\ast$-transpose involution on the matrix algebras).
\end{lemma}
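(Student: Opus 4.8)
The plan is to exploit the fact, already recorded above from \cite[Lemma 3.4 and Proposition 3.5]{AAS1}, that $\phi$ is an algebra isomorphism; consequently the only thing left to verify is that $\phi$ intertwines the two involutions, that is, $\phi(x^\ast)=\phi(x)^\ast$ for every $x\in L_K(E)$, where on the right $^\ast$ denotes the $^\ast$-transpose. Since both the source and the target split as direct sums, I would first reduce to a single summand. For this I would observe that each ideal $I_{v_i}$ is closed under the standard involution: a general element $\sum_t k_t\alpha_t\beta_t^\ast$ of $I_{v_i}$ (with $r(\alpha_t)=r(\beta_t)=v_i$) has adjoint $\sum_t \overline{k_t}\,\beta_t\alpha_t^\ast$, which again lies in $I_{v_i}$ because $r(\beta_t)=r(\alpha_t)=v_i$. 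Hence the standard involution respects the decomposition $L_K(E)=\bigoplus_{i=1}^m I_{v_i}$ and acts summand by summand, exactly as the $^\ast$-transpose involution acts summand by summand on $\bigoplus_{i=1}^m{\mathbb M}_{\mu(v_i)}(K)$. It therefore suffices to prove $\phi(x^\ast)=\phi(x)^\ast$ for $x$ in a single summand $I_v$.

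The core of the argument is then a direct computation using the explicit form of $\phi|_{I_v}$ recalled just before the statement. Writing a typical element of $I_v$ as $x=\sum_{i,j}k_{ij}\alpha_i\alpha_j^\ast$, where $\alpha_1,\dots,\alpha_{\mu(v)}$ lists all the paths ending at $v$ and the indices run over $1,\dots,\mu(v)$, we have $\phi(x)=(k_{ij})$ by definition. Applying the standard involution and using $(\alpha_i\alpha_j^\ast)^\ast=\alpha_j\alpha_i^\ast$ gives $x^\ast=\sum_{i,j}\overline{k_{ij}}\,\alpha_j\alpha_i^\ast$. Relabelling the summation indices (set $a=j$ and $b=i$) rewrites this in the canonical form $x^\ast=\sum_{a,b}\overline{k_{ba}}\,\alpha_a\alpha_b^\ast$, so that $\phi(x^\ast)$ is the matrix whose $(a,b)$ entry is $\overline{k_{ba}}$. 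On the other hand, by the definition of the $^\ast$-transpose, the $(a,b)$ entry of $\phi(x)^\ast=(k_{ij})^\ast$ is precisely $\overline{k_{ba}}$. The two matrices coincide, giving $\phi(x^\ast)=\phi(x)^\ast$, and by additivity this extends to all of $I_v$ and hence, by the first paragraph, to all of $L_K(E)$.

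I do not expect a genuine obstacle here: the statement ultimately says no more than that the standard involution, which turns $\alpha_i\alpha_j^\ast$ into $\alpha_j\alpha_i^\ast$ and conjugates the scalar, is carried by $\phi$ into the operation of transposing the index pair and conjugating the entry, which is exactly the $^\ast$-transpose. The one point demanding a little care is the bookkeeping of the index swap, namely keeping track of the fact that the $(i,j)$ entry of $\phi(x)^\ast$ involves $k_{ji}$ rather than $k_{ij}$; once the relabelling above is performed the identification is immediate. A minor preliminary point worth stating explicitly is that every element of $I_v$ really can be written in the form $\sum_{i,j}k_{ij}\alpha_i\alpha_j^\ast$ with a uniquely determined coefficient matrix $(k_{ij})$, which is already implicit in $\phi|_{I_v}$ being a well-defined bijection.
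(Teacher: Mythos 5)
Your proof is correct and takes essentially the same approach as the paper's: reduce to a single sink via the direct-sum decomposition (your explicit remark that each $I_{v_i}$ is closed under the standard involution is a point the paper leaves implicit), then verify $\phi(x^\ast)=\phi(x)^\ast$ by the index-relabelling computation on $\sum_{i,j}k_{ij}\alpha_i\alpha_j^\ast$. There are no gaps.
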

\begin{proof}
Since $\phi$ maps direct summands $I_{v_i}$ on direct summands ${\mathbb M}_{\mu (v_{i})}(K),$ it is enough if
we prove the statement when $E$ has only one sink $v$. If $\alpha _{1},\dots,\alpha _{\mu (v)}$ are all the
different paths (including the trivial path) ending in $v,$ then a typical element of $L_K(E)=I_{v}$ has the
form $\sum_{i,j}k_{ij}\alpha _{i}\alpha _{j}^{\ast}$ for  $i,j=1,\dots,\mu(v),$ $\alpha_i$ and $\alpha_j$
paths ending at $v$ and $k_{ij}\in K.$ Then we have
$$\phi \left(\left(\sum_{i,j}k_{ij}\alpha _{i}\alpha_{j}^{\ast}\right)^{\ast}\right)=
\phi \left(\sum_{i,j}\overline{k_{ij}}\alpha _{j}\alpha_{i}^{\ast}\right)=
\phi \left(\sum_{i,j}\overline{k_{ji}}\alpha _{i}\alpha_{j}^{\ast}\right)
=(\overline{k_{ji}})=(k_{ij})^*.$$
This proves the claim since $(k_{ij})^*=\left(\phi \left(\sum_{i,j}k_{ij}\alpha _{i}\alpha_{j}^{\ast}\right)
\right)^{\ast}.$
\end{proof}

\medskip

We finally have all the ingredients in hand to prove the main result of the paper.

\begin{theorem}\label{characterizationtheorem}
Let $E$ be an arbitrary graph, $K$ be a field with involution $^-$ and let $\sigma=\sup \{\mu (v):v\in
E^{0}\}$ in case the supremum is finite or $\sigma=\omega$ otherwise. The following conditions are equivalent.
\begin{enumerate}[{\rm (i)}]
\item $L_{K}(E)$ is $^\ast$-regular.
\item $L_{K}(E)$ is regular and proper.
\item $E$ is acyclic and $K$ is $n$-proper for every finite $n\leq \sigma$.
\end{enumerate}
\end{theorem}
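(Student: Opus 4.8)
The plan is to prove the chain of equivalences (i) $\Leftrightarrow$ (ii) $\Rightarrow$ (iii) $\Rightarrow$ (i), using the tools already assembled. The equivalence (i) $\Leftrightarrow$ (ii) is immediate from Proposition \ref{regularproper}, since $L_K(E)$ is always a ring with local units; so the real content lies in connecting the regular-and-proper condition to the graph-and-field condition in (iii). First I would handle the regularity half: by the characterization in \cite{AR}, $L_K(E)$ is regular if and only if $E$ is acyclic, so throughout the rest of the argument I may assume $E$ is acyclic and focus on translating properness of the involution into the $n$-properness condition on $K$.

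For the implication (ii) $\Rightarrow$ (iii), assuming regularity I already have $E$ acyclic, so it remains to show that properness of $L_K(E)$ forces $K$ to be $n$-proper for every finite $n \le \sigma$. The key idea is to realize a potential failure of $n$-properness of $K$ inside $L_K(E)$ whenever there is a vertex $v$ with $\mu(v) \ge n$. Concretely, if $v$ is a vertex with at least $n$ distinct paths $\alpha_1,\dots,\alpha_n$ ending at $v$, and if $\sum_{i=1}^n \overline{k_i}k_i = 0$ in $K$ with the $k_i$ not all zero, then I would form the element $a = \sum_{i=1}^n k_i \alpha_i v$ (or a suitable single-column-type element) and compute $a^*a$, using the Cuntz--Krieger relation (CK1) to collapse $\alpha_i^*\alpha_j$ down to $\delta_{ij}v$. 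This should yield $a^*a = (\sum_i \overline{k_i}k_i)v = 0$ while $a \ne 0$ (linear independence of the $p q^*$ spanning set), contradicting properness; hence $K$ must be $n$-proper for each finite $n \le \sigma$. The main subtlety here is bookkeeping the supremum $\sigma$ correctly: I must ensure that for each finite $n \le \sigma$ there genuinely exists a vertex with $\mu(v) \ge n$, which follows from the definition of $\sigma$ as the supremum (truncated to $\omega$ when infinite).

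For the converse (iii) $\Rightarrow$ (i), assuming $E$ acyclic and $K$ $n$-proper for all finite $n \le \sigma$, I would again invoke \cite{AR} to get that $L_K(E)$ is regular, so by Proposition \ref{regularproper} it suffices to prove that the involution on $L_K(E)$ is proper. The natural strategy is reduction to the finite case via the Subalgebra Construction of \S 2: given $a$ with $a^*a = 0$, the finite set $S = \{a,a^*\}$ lands in a subalgebra $B(S) \cong L_K(E_F) \oplus (\bigoplus Kx_i)$, and as in the proof of Proposition \ref{involutionproperarbitrarygraphs} I may enlarge $E_F$ to a finite graph $G$ so that $B(S) \cong L_K(G)$ as $^*$-algebras. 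Since $E_F$ is a finite acyclic graph, Lemma \ref{canonicalisomorphism} gives a $^*$-isomorphism $L_K(G) \cong \bigoplus_i \mathbb{M}_{\mu(v_i)}(K)$ with the $^*$-transpose involution. Properness of this matrix algebra then follows from Lemma \ref{R_n-proper}, provided $K$ is $\mu(v_i)$-proper for each sink $v_i$ of $G$.

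The main obstacle I anticipate is the bound-matching step in this last implication: I must verify that the matrix sizes $\mu(v_i)$ appearing in the finite graph $G$ are all finite and at most $\sigma$, so that the hypothesis ``$K$ is $n$-proper for every finite $n \le \sigma$'' actually applies to each $\mathbb{M}_{\mu(v_i)}(K)$. Here $\mu(v_i)$ denotes the number of paths ending at $v_i$ computed \emph{within} the finite graph $G$ (or $E_F$), and I would need to argue that this local count is dominated by the global quantity $\mu$ in $E$, hence by $\sigma$; since $G$ is finite each such count is finite, and the relationship between paths in $E_F$ and paths in $E$ under the homomorphism $\theta$ should give the required inequality. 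Getting this comparison precise — rather than merely plausible — is the delicate point, and it is where I would spend the most care.
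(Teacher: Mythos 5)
Your proposal is correct in outline, and half of it genuinely diverges from the paper. The agreement: (i) $\Leftrightarrow$ (ii) via Proposition \ref{regularproper}, and your (iii) $\Rightarrow$ (i) is exactly the paper's mechanism --- Subalgebra Construction, Lemma \ref{canonicalisomorphism}, Lemma \ref{R_n-proper}. The divergence is in (ii) $\Rightarrow$ (iii): the paper never builds a direct witness inside $L_K(E)$; instead it runs the \emph{same} subalgebra machinery in both directions, choosing for each finite $n\leq\sigma$ a vertex $v_n$ and a finite edge set $F_n$ so that $v_n$ is a sink of $E_{F_n}$ with $\mu_{E_{F_n}}(v_n)=n$, whence $L_K(E_{F_n})$ contains an ideal $\cong {\mathbb M}_n(K)$ and Lemma \ref{R_n-proper} converts properness of $L_K(E)$ into $n$-properness of $K$. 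Your construction --- $a=\sum_{i=1}^n k_i\alpha_i$ with $\alpha_1,\dots,\alpha_n$ distinct paths ending at $v$ and $\sum\overline{k_i}k_i=0$ --- is more elementary and avoids the $E_F$ apparatus for this direction entirely; but note that the collapse $\alpha_i^*\alpha_j=\delta_{ij}v$ is \emph{not} a consequence of (CK1) alone: it fails when one path is a proper prefix of the other ($\alpha^*(\alpha\gamma)=\gamma$). You must invoke acyclicity here, which you have at that stage: if $\alpha_i$ were a proper prefix of $\alpha_j$ with both ranges equal to $v$, the quotient $\gamma$ would be a nontrivial closed path at $v$, impossible in an acyclic graph. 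With that remark, and the observation that $\alpha_j^*a=k_jv$ shows $a\neq 0$ when some $k_j\neq 0$, your argument is complete and arguably cleaner than the paper's.

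The one place where your write-up stops short of a proof is the bound-matching step you yourself flag in (iii) $\Rightarrow$ (i): that every matrix size $\mu_{E_F}(v_i)$ arising from a subalgebra $B(S)$ is finite and at most $\sigma$. The paper settles this with an explicit path correspondence: $e_1e_2\dots e_k$ is a path of length $k$ in $E$ ending at the vertex corresponding to $v_i$ if and only if $(e_1,e_2)(e_2,e_3)\dots(e_k,v_i)$ is a path of length $k$ in $E_F$ ending at $v_i$. This injects paths of $E_F$ ending at a sink $v_i$ into paths of $E$ ending at the corresponding vertex of $E$, giving $\mu_{E_F}(v_i)\leq \mu_E(w)\leq\sigma$ (finiteness being automatic since $E_F$ is finite), so the hypothesis that $K$ is $n$-proper for every finite $n\leq\sigma$ applies to each matrix block. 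This is a short verification, not a deep one, so I would call your proposal essentially complete once you supply it --- but as written it is the step carrying the actual content of the upper-bound direction, and the paper's proof is precisely the record of why it holds.
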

\begin{proof}
(i) $\Leftrightarrow$ (ii) is Proposition \ref{regularproper}.

\medskip

(ii) $\Leftrightarrow$ (iii). By \cite[Theorem 1]{AR}, $L_{K}(E)$ is  regular if and only if $E$ is acyclic.
So it is enough if we show, under the assumption that $L_{K}(E)$ is regular (equivalently, $E$ is acyclic),
that the involution $^-$ in $K$ is $n$-proper for every finite $n\leq \sigma$ if and only if the involution
$^\ast$ in $L_{K}(E)$ is proper.

Now \cite[Proposition 2 and Theorem 1]{AR} also state that, when $E$ is acyclic, $L_{K}(E)$ is a directed
union of subalgebras $B(S)$ where each $B(S)\overset{\theta }{\cong }L_{K}(E_{F})\bigoplus (\bigoplus
_{i=1}^{m}Kx_{i})$ with $E_{F}$ a finite acyclic graph constructed corresponding to various non-empty finite
subsets $F$ of edges in $E$. Moreover $\theta$ is a $^\ast$-algebra isomorphism as we noted before. For a
fixed $F$, $E_F$ has a finite number of sinks. Let us denote them by $v_{1},\dots, v_k$. Then
$L_{K}(E_{F})\overset{\phi }{\cong }\bigoplus _{i=1}^{k}{\mathbb M}_{\mu _{E_{F}}(v_{i})}(K)$ as
$^\ast$-algebras by Lemma \ref{canonicalisomorphism}. Thus, the involution $^\ast$ in $L_{K}(E)$ is proper if
and only if the $^\ast$-transpose involution is proper in ${\mathbb M}_{\mu _{^{E_{F}}}(v_{i})}(K)$ with
$v_{i}\in E_{F}$ for all the various graphs $E_{F}$ corresponding to each $B(S)$ in the stated directed system
of subalgebras of $L_{K}(E)$.

We distinguish two situations.

\smallskip

\underline{Case 1}: Suppose $\sigma $ is infinite. Then either $\mu (v)$ is infinite for some vertex $v$ or
for every positive integer $n$ there is a vertex $v_{n}$ with $\mu _{E}(v_{n})$ an integer larger than $n$. In
either case for each integer $n>1,$ we can choose a vertex $v_{n}$ and a finite subset $F_{n}$ of edges that
appear in the $n-1$ distinct paths (other than the trivial path $v_{n}$) ending in $v_{n}$. The vertex $v_{n}$
is a sink in $E_{F_{n}}$ by the definition of the graph $E_{F_{n}}.$

Moreover, $e_{1}\dots e_{k}$ is a path of length $k$ in $E$ ending in $v_{n}$ if and only if the path in
$E_{F_n}$ given by $(e_{1},e_{2})(e_{2},e_{3})\dots (e_{k},v_{n})$  has length $k$ and ends in $v_{n}$. Thus,
$v_{n}$ is a sink in $E_{F_{n}}$ with $\mu_{E_{F_{n}}}(v_{n})=n.$ The graph $E_{F_{n}}$ is finite acyclic (by
\cite[Lemma 1]{AR}). So, $L_{K}(E_{F_{n}})$ contains the ideal $I_{v_{n}}\cong {\mathbb M}_{n}(K)$.

Since this holds for every $n$, the involution $^\ast$ is proper in each subalgebra $B(S)$ if and only if the
$^\ast$-transpose involution is proper in ${\mathbb M}_{n}(K)$ for each positive integer $n$. This is
equivalent, by Lemma \ref{R_n-proper}, to the statement that the involution $^-$ in $K$ is $n$-proper for
every $n$, that is, that $K$ is positive definite.

\smallskip

\underline{Case 2}: Suppose that $\sigma$ is finite, say $\sigma =n$ for some positive integer $n$. If $n=1$,
every vertex in $E$ is isolated and $L_{K}(E)$ is isomorphic to $\bigoplus _{v\in E^{0}}Kv$ where $Kv\cong K.$
Both of those algebras are proper so we are done.

Suppose $n>1$ and let $v$ be a vertex for which $\mu (v)=n$.
Let $F_{v}$ be the non-empty finite
set of edges in all the $\mu _{E}(v)-1$ nontrivial paths ending in $v$. As noted in Case 1, $v$ is a sink (and
in this case, the only sink) in the finite acyclic graph $E_{F_{v}}$ and, moreover, $\mu
_{E_{F_{v}}}(v)=\mu_{E}(v)$.

So by Lemma \ref{canonicalisomorphism}, we have that $L_{K}(E_{F_{v}})$ $\cong {\mathbb M}_{\mu _{E}(v)}(K)$
as $^*$-algebras. Moreover, as $n$ is the least upper bound of $\{\mu (v):v\in E^{0}\}$ then all matrices
${\mathbb M}_{\mu_{E_{F_i}}(v_i)}(K)$ appearing in all other subalgebras $B(S)$ for various other finite
subsets of edges $F_i$ and sinks $v_i$ will all have order that is less or equal to $n$.

Therefore the involution $^\ast$ is proper in each $B(S)$ if and only if the $^\ast$-transpose involution is
proper in ${\mathbb M}_{\mu _{E}(v)}(K)$. Since $\mu (v)=n$, the last statement holds exactly when the
involution $^-$ in $K$ is $n$-proper, again by Lemma \ref{R_n-proper}. This finishes the proof.
\end{proof}

It is interesting to point out that the presence of involution gives a more prominent role to the field $K$
than it had in the previous characterization theorems (e.g., simplicity \cite{AA1}, purely infinite simplicity
\cite{AA2}, finite-dimensionality \cite{AAS1}, just to cite a few). In particular, Theorem
\ref{characterizationtheorem} also contrasts the characterization of regularity from \cite{AR} that was
independent of the field $K.$

We further illustrate this behavior with an easy example. If $E$ is the graph
$$\xymatrix{{\bullet} \ar [r]  & {\bullet} }$$
then $L_K(E)\cong {\mathbb M}_2(K)$ as $^\ast$-algebras for any field $K$. If $K={\mathbb R}$ with the
identity involution, $L_{\mathbb{R}}(E)$ is $^*$-regular because ${\mathbb R}$ is positive definite. However
if $K={\mathbb C}$ with the identity involution, then $L_{\mathbb{C}}(E)\cong {\mathbb M}_{2}( \mathbb{C})$ is
regular but it is \emph{not} $^\ast$-regular (since the identity involution in $\mathbb{C}$ is not
$2$-proper). Furthermore, if $K={\mathbb C}$ with the conjugate involution, then $L_{\mathbb{C}}(E)$ \emph{is}
$^*$-regular, because the conjugation of complex numbers is positive definite.

Also, since the identity involution on a field of characteristic $n>0$ is not $n$-proper, the properness (thus
also $^*$-regularity) of a Leavitt path algebra over such field depends on the characteristic of the field.
This fact also brings the field characteristic into spotlight.

Let us note the following corollary of Theorem \ref{characterizationtheorem}.

\begin{corollary}\label{starregularequivalences} Let $K$ a field with involution. The following conditions
are equivalent.
\begin{enumerate} [{\rm (i)}]
\item The involution on $K$ is positive definite.
\item $L_{K}(E)$ is $^\ast$-regular for every acyclic graph $E$.
\end{enumerate}
\end{corollary}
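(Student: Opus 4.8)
The plan is to derive both implications directly from Theorem \ref{characterizationtheorem}, so that the corollary reduces to a bookkeeping observation about which thresholds $\sigma$ can be realized by acyclic graphs. Recall that positive definiteness of $K$ means exactly that $K$ is $n$-proper for every positive integer $n$, while criterion (iii) of Theorem \ref{characterizationtheorem} only requires $n$-properness up to $\sigma=\sup\{\mu(v):v\in E^0\}$. The whole argument hinges on playing these two thresholds off against each other.

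For the implication (i) $\Rightarrow$ (ii) I would argue as follows. Assume $K$ is positive definite and let $E$ be an arbitrary acyclic graph. Then $E$ is acyclic by hypothesis, and $K$ is $n$-proper for every positive integer $n$, hence in particular for every finite $n\le\sigma$, whatever the value of $\sigma$ for this $E$ happens to be. Thus condition (iii) of Theorem \ref{characterizationtheorem} is satisfied, and $L_K(E)$ is $^\ast$-regular. This direction needs no choice of graph and is immediate.

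For the converse (ii) $\Rightarrow$ (i), the idea is to choose, for each fixed $n$, an acyclic graph whose threshold $\sigma$ is at least $n$, and then read off $n$-properness from Theorem \ref{characterizationtheorem}. The natural candidate is the line graph $E_n$ with $n$ vertices and $n-1$ edges from Examples \ref{examples}, for which $L_K(E_n)\cong\mathbb{M}_n(K)$. This graph is finite and acyclic, and if $v$ denotes its sink vertex then the paths ending at $v$ are exactly the $n$ suffixes of the maximal path (including the trivial path $v$), so $\mu(v)=n$ and hence $\sigma=n$. By hypothesis (ii), $L_K(E_n)$ is $^\ast$-regular, so the equivalence (i) $\Leftrightarrow$ (iii) of Theorem \ref{characterizationtheorem} forces $K$ to be $m$-proper for every finite $m\le n$, and in particular $n$-proper. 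Since $n$ was arbitrary, $K$ is $n$-proper for all $n$, that is, positive definite.

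I do not expect a genuine obstacle: essentially all the content has already been absorbed into Theorem \ref{characterizationtheorem}. The only point requiring a moment's care is verifying that the selected graphs realize arbitrarily large finite values of $\sigma$ while staying acyclic, and the line graphs do this transparently. As a cosmetic alternative one could replace the family $\{E_n\}$ by a single infinite line graph $v_1\to v_2\to v_3\to\cdots$, for which $\mu(v_n)=n$ is unbounded and so $\sigma=\omega$; a single application of Theorem \ref{characterizationtheorem} then yields $n$-properness of $K$ for every $n$ simultaneously.
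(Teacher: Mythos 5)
Your proof is correct and takes essentially the same approach as the paper: the forward implication is read off directly from Theorem~\ref{characterizationtheorem}, and the converse uses exactly the same line graphs realizing ${\mathbb M}_n(K)$ for arbitrary $n$. The only cosmetic difference is that the paper finishes the converse by invoking Lemma~\ref{R_n-proper} on ${\mathbb M}_n(K)$, whereas you stay inside the theorem by computing $\sigma=n$ for the line graph and applying the equivalence (i)$\Leftrightarrow$(iii); both steps are valid, and your single-infinite-line variant works as well.
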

\begin{proof}
(i) $\Longrightarrow$ (ii) follows directly from Theorem \ref{characterizationtheorem}.

\par

(ii) $\Longrightarrow$ (i) Let us assume that $L_K(E)$ is $^\ast$-regular for every acyclic $E$. Consider the
line of length $n-1$ (see second graph in Examples \ref{examples}). The Leavitt path algebra of this graph is
isomorphic to ${\mathbb M}_{n}(K).$ From the assumption that this algebra is $^*$-regular, we obtain that $K$
is $n$-proper by Lemma \ref{R_n-proper}. Since this holds for every $n$, $K$ is positive definite.
\end{proof}

It is also interesting to note that the two equivalences of Corollary \ref{starregularequivalences} parallel
the first two equivalences of Proposition \ref{moreimplications}. The last equivalence of Proposition
\ref{moreimplications} in the $^*$-setting would have the form: `` $L_{K}(E)$ is $^\ast$-regular for some
acyclic graph $E$''. However, this statement is weaker than the other two equivalences in Corollary
\ref{starregularequivalences} so we do not have complete analogy with Proposition \ref{moreimplications}. To
see this, consider a graph consisting of a single vertex and the complex numbers with the identity involution.
The Leavitt path algebra of this graph is $^\ast$-regular but the field is not positive definite.

\section{Handelman's Conjecture for Leavitt path algebras}

We close this paper by pointing out that Handelman's conjecture has a positive answer for the family of
Leavitt path algebras of arbitrary graphs. The conjecture can be stated as follows.

\begin{conjecture} {\rm (Handelman, \cite[Problem 48, p. 380]{G2})}. Every $^\ast$-regular ring is
unit-regular.
\end{conjecture}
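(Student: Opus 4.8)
The first thing to observe is that this is Handelman's conjecture in its \emph{full} generality, and that it remains an open problem; the paper accordingly does not establish it for arbitrary $^\ast$-regular rings but instead verifies it inside the class of Leavitt path algebras. What follows is therefore the natural strategy one would attempt for the general statement, carried up to the point where it currently stalls.

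The plan is to reduce unit-regularity to a cancellation property and then try to extract that cancellation from the involution. For a von Neumann regular ring $R$, a standard characterization says that $R$ is unit-regular if and only if the monoid $V(R)$ of isomorphism classes of finitely generated projective right $R$-modules is cancellative, i.e. $[A]+[C]=[B]+[C]$ forces $[A]=[B]$. So the task becomes to show that a \emph{proper} involution on a regular ring forces cancellation in $V(R)$, and the involution is exactly what one hopes will supply this. First, properness yields \emph{direct finiteness}: if $ab=1$ then $a(1-ba)=0$, and combining this with the fact that in a regular $^\ast$-ring every idempotent is equivalent to a projection, together with the implication $x^\ast x=0\Rightarrow x=0$, one deduces $ba=1$. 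More usefully, the involution linearizes module isomorphisms: for projections $e,f$ one has $eR\cong fR$ as right modules precisely when $e$ and $f$ are joined by a partial isometry $w$ with $w^\ast w=e$ and $ww^\ast=f$. Thus $V(R)$ is governed by the projection lattice and its $^\ast$-equivalence relation, and the natural line of attack is to convert a relation $[A]+[C]=[B]+[C]$ into a chain of partial isometries realizing $[A]=[B]$ outright, in effect manufacturing a ``$^\ast$-cancellation'' from the polar-decomposition structure of the ring.

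The main obstacle --- and the reason the conjecture is still open --- is precisely this final step: bridging the gap between the finiteness that properness guarantees (which controls cancellation against free modules) and full cancellation by an \emph{arbitrary} finitely generated projective $[C]$. In well-behaved settings, such as finite Baer $^\ast$-rings or finite von Neumann algebras where a dimension function or a comparability axiom is available, the partial-isometry calculus does close this gap; but for a general $^\ast$-regular ring there is no known mechanism forcing comparability of arbitrary projections, so the cancellation argument cannot be completed. This is exactly why the paper works with Leavitt path algebras instead: by Theorem \ref{characterizationtheorem} a $^\ast$-regular $L_K(E)$ has $E$ acyclic, hence $L_K(E)$ is a directed union of finite direct sums of matrix algebras ${\mathbb M}_n(K)$, whose $V$-monoids are free and manifestly cancellative, so that unit-regularity follows there without the general comparability input that the full conjecture would demand.
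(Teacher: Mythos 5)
You are right on the central point: this ``statement'' is an open conjecture, and the paper does not (and could not) prove it in general --- it only states it, restricts attention to Leavitt path algebras, and verifies it there. Your account of the Leavitt path algebra case is essentially the paper's own argument: $^\ast$-regularity of $L_K(E)$ forces $E$ acyclic by Theorem \ref{characterizationtheorem}, and acyclicity gives unit-regularity by \cite[Theorem 2]{AR} (whose underlying mechanism is indeed the directed-union-of-matricial-algebras structure you describe, so your $V$-monoid cancellation gloss is a fair unpacking of the citation). The one piece of the paper's treatment you omit is the non-unital case: since $L_K(E)$ need not have an identity, the paper introduces \emph{locally unit-regular} rings, proves that acyclicity yields local unit-regularity, and checks that a unital locally unit-regular ring is unit-regular, so that Handelman's conjecture (which presumes a unital ring) is recovered when $L_K(E)$ is unital.

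Two cautions about your speculative sketch of the general strategy, which, while explicitly offered as incomplete, asserts some things as fact. First, your parenthetical derivation of direct finiteness from properness is too quick --- $a(1-ba)=0$ alone does not yield $ba=1$ without more work; the fact itself is a genuine theorem (due to Ara and Menal) but requires a cleverer argument than equivalence of idempotents to projections. Second, and more seriously, the claim that in a $^\ast$-regular ring $eR\cong fR$ holds \emph{precisely} when $e$ and $f$ are linked by a partial isometry $w$ with $w^\ast w=e$, $ww^\ast=f$ is not a known fact in this generality: algebraic equivalence of projections being implemented by partial isometries is exactly the kind of polar-decomposition axiom (cf. the LP$\sim$RP and SR conditions in Berberian's Baer $^\ast$-ring framework) that must be postulated or derived from extra structure, and its failure to be automatic is part of why the conjecture is open. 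So your diagnosis of where the general argument stalls is correct, but the stall occurs one step earlier than you place it: already at the passage from module isomorphism to $^\ast$-equivalence, not only at cancellation against an arbitrary $[C]$.
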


This conjecture assumes that the ring is unital. First, we note that it is true for unital Leavitt path
algebras. Let us assume that a unital $L_K(E)$ is $^*$-regular. Then $E$ is acyclic by Theorem
\ref{characterizationtheorem}. Then we have that $L_K(E)$ is unit-regular by \cite[Theorem 2]{AR}.

To prove that the conjecture remains true for Leavitt path algebras of arbitrary graphs, we
adapt the notion of unit-regularity for rings with local units, as was done in \cite{AR} for instance.

Recall that a ring $R$ with identity is said to be \emph{unit-regular} if for each $a\in R$, there is a unit
(an invertible element) $u$ such that $aua=a$. If $R$ is a ring with local units, then $R$ is called
\emph{locally unit-regular} if for each $a\in R$ there is an idempotent (a local unit) $v$ and local inverses
$u,u'$ such that $uu^{\prime }=v=u^{\prime }u$, $va=av=a$ and $aua=a$.

Clearly, a unit-regular (unital) ring is locally unit-regular (take the idempotent $v$ from the definition of
locally unit-regular to be the identity). Conversely, if a ring with identity is locally unit-regular, then it
is unit-regular (see also \cite[Lemma 3 (1)]{AR}). To see this, let $a\in R$. Then there is an idempotent $v$
and local inverses $u,u'$ in $vRv$ such that $uu^{\prime }=v=u^{\prime }u$, $va=av=a$ and $aua=a$. Then
$w=u+(1-v)$ and $w^{\prime }=u^{\prime }+(1-v)$ satisfy $ww^{\prime }=1=w^{\prime }w$ and $a=awa$. Hence $R$
is unit-regular.

\begin{corollary}
Let $E$ be an arbitrary graph and let $K$ be a field with involution. Suppose $L_{K}(E)$ is $^\ast$-regular.
Then
\begin{enumerate}[{\rm (i)}]
\item $L_{K}(E)$ is locally unit-regular.
\item If $L_K(E)$ is a unital ring, then $L_{K}(E)$ is unit-regular.
\end{enumerate}
\end{corollary}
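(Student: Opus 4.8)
The plan is to reduce everything to the finite acyclic case handled by \cite[Theorem 2]{AR}, using the Subalgebra Construction to realize $L_K(E)$ as a directed union of unital unit-regular subalgebras. Part (ii) will be essentially immediate: if $L_K(E)$ is unital and $^\ast$-regular, then Theorem \ref{characterizationtheorem} forces $E$ to be acyclic, and \cite[Theorem 2]{AR} then yields unit-regularity directly; alternatively, (ii) follows from (i) together with the fact, recorded just before this corollary, that a locally unit-regular ring with identity is unit-regular. So the substantive task is (i).

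For (i), I would first apply Theorem \ref{characterizationtheorem} to the hypothesis that $L_K(E)$ is $^\ast$-regular, obtaining that $E$ is acyclic. With $E$ acyclic, the Subalgebra Construction of \cite{AR} expresses $L_K(E)$ as the directed union of the subalgebras $B(S)$, one for each finite subset $S$ of $L_K(E)$, where each $B(S) \cong L_K(E_F) \oplus (\bigoplus_{i=1}^{m} K x_i)$ with $E_F$ a finite acyclic graph. The key observation is that each such $B(S)$ is unital — being a finite direct sum of unital rings, since a finite graph yields a unital Leavitt path algebra and each $K x_i \cong K$ — and that it is unit-regular.

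To see the unit-regularity of $B(S)$, note that $L_K(E_F)$ is unit-regular by \cite[Theorem 2]{AR} because $E_F$ is finite and acyclic, while each field $K x_i$ is trivially unit-regular; a finite direct sum of unital unit-regular rings is again unit-regular, by choosing the regularity witness coordinatewise to be a unit in each factor. Now given any $a \in L_K(E)$, I would take a finite set $S$ with $a \in B(S)$ and set $v = 1_{B(S)}$, which is an idempotent of $L_K(E)$ (a sum of vertices) satisfying $va = av = a$. Unit-regularity of $B(S)$ supplies a unit $u \in B(S)$ with $aua = a$; setting $u' = u^{-1}$ computed inside $B(S)$ gives $u u' = v = u' u$. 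These data $v, u, u'$ are exactly what the definition of locally unit-regular demands, so $L_K(E)$ is locally unit-regular.

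The routine bookkeeping — that $1_{B(S)}$ is genuinely idempotent in the ambient algebra, that coordinatewise units assemble to a unit of the direct sum, and that the inverse $u^{-1}$ lives in $B(S)$ rather than only in the ambient ring — is where care is needed, but none of it presents a real obstacle once the directed-union structure and the finite acyclic case are in hand. The only genuinely nontrivial ingredients are imported, namely the acyclicity consequence of Theorem \ref{characterizationtheorem} and the finite unit-regularity statement \cite[Theorem 2]{AR}; the remaining work is simply the local-to-global transfer along the directed union, so I expect no serious difficulty there.
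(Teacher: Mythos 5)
Your proposal is correct, and its skeleton matches the paper's: both deduce acyclicity of $E$ from Theorem \ref{characterizationtheorem} and then lean on \cite[Theorem 2]{AR}, with part (ii) following from the fact (recorded just before the corollary) that a unital locally unit-regular ring is unit-regular. The difference is in how much of \cite{AR} you invoke: the paper cites \cite[Theorem 2]{AR} directly for the statement that $L_K(E)$ over an arbitrary acyclic graph is locally unit-regular, making the proof of (i) a two-line citation, whereas you use only the finite (unital) acyclic case of that theorem and reconstruct the local-to-global transfer yourself via the Subalgebra Construction -- writing $L_K(E)$ as a directed union of unital unit-regular subalgebras $B(S)$ and checking that $v=1_{B(S)}$, a unit $u\in B(S)$ with $aua=a$, and $u'=u^{-1}$ computed in $B(S)$ witness local unit-regularity. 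Your argument is sound (in particular, a finite direct sum of unital unit-regular rings is unit-regular, and $u^{-1}$ does lie in $B(S)$ since $u$ is invertible there); what it buys is self-containment, essentially re-deriving the arbitrary-graph case of \cite[Theorem 2]{AR} from its finite case, at the cost of duplicating work the reference already does. One small inaccuracy worth fixing: $1_{B(S)}$ need not be a sum of vertices of $E$, since the embedding $\theta:L_K(E_F)\to L_K(E)$ sends vertices of $E_F$ to idempotents of $L_K(E)$ that are generally not vertices; but this is harmless, as the definition of locally unit-regular only requires $v$ to be an idempotent with $va=av=a$, which $1_{B(S)}$ is.
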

\begin{proof}
(i). If $L_{K}(E)$ is $^\ast$-regular, we have that $E$ is acyclic by Theorem \ref{characterizationtheorem}.
Then $L_{K}(E)$ is locally unit-regular by \cite[Theorem 2]{AR}.
\smallskip

(ii) is a consequence of the fact that every unital locally unit-regular ring is unit-regular.
\end{proof}

Let us also note that the converse of Handelman's Conjecture is not true. The examples of unit-regular and not
$^*$-regular rings can be found in the class of Leavitt path algebras as well. For instance, ${\mathbb
M}_2({\mathbb C})$ with the identity involution on ${\mathbb C}$ is such an example: we know it is not
$^*$-regular but it is unit-regular (as a semisimple ring, see \cite[page 38]{G2}).

\section*{Acknowledgments}

The authors thank Gene Abrams for his valuable discussions during the preparation of this paper. The first
author was partially supported by the Spanish MEC and Fondos FEDER through project MTM2007-60333, and by the
Junta de Andaluc\'{\i}a and Fondos FEDER, jointly, through projects FQM-336 and FQM-2467.


\end{document}